\newtheorem{theorem}{Theorem}[section]
\newtheorem{fact}[theorem]{Fact}
\newtheorem{lemma}[theorem]{Lemma}
\theoremstyle{definition}
\newtheorem{definition}[theorem]{Definition}
\newcommand{\rstr}{{\upharpoonright}}
\DeclareMathOperator{\scrP}{\mathscr{P}}
\DeclareMathOperator{\fin}{fin}
\DeclareMathOperator{\scrPI}{\mathscr{P}_I}
\DeclareMathOperator{\dom}{dom}
\DeclareMathOperator{\ran}{ran}
\begin{document}


\baselineskip=17pt


\title[A generalized Cantor theorem]{A generalized Cantor theorem in $\mathsf{ZF}$}

\author{Yinhe Peng}
\address{Institute of Mathematics\\
Chinese Academy of Sciences\\
East Zhong Guan Cun Road No.~55\\
Beijing 100190\\
People's Republic of China}
\email{pengyinhe@amss.ac.cn}

\author{Guozhen Shen}
\address{School of Philosophy\\
Wuhan University\\
No.~299 Bayi Road\\
Wuhan\\
Hubei Province 430072\\
People's Republic of China}
\email{shen\_guozhen@outlook.com}

\date{}

\begin{abstract}
It is proved in $\mathsf{ZF}$ (without the axiom of choice) that, for all infinite sets~$M$,
there are no surjections from $\omega\times M$ onto~$\scrP(M)$.
\end{abstract}

\subjclass[2010]{Primary 03E10; Secondary 03E25}

\keywords{$\mathsf{ZF}$, Cantor's theorem, surjection, axiom of choice}

\maketitle

\section{Introduction}\label{s014}
Throughout this paper, we shall work in $\mathsf{ZF}$
(i.e., the Zermelo--Fraenkel set theory without the axiom of choice).

In~\cite{Cantor1892}, Cantor proves that, for all sets~$M$, there are no injections from $\scrP(M)$ into~$M$,
from which it follows easily that, for all sets~$M$, there are no surjections from $M$ onto~$\scrP(M)$.
In~\cite{Specker1954}, Specker proves a generalization of Cantor's theorem,
which states that, for all infinite sets~$M$, there are no injections from $\scrP(M)$ into $M^2$.
In~\cite{Forster2003}, Forster proves another generalization of Cantor's theorem,
which states that, for all infinite sets~$M$, there are no finite-to-one functions from $\scrP(M)$ to~$M$.
In~\cite{Shen2017,Shen2020,Shen2021}, several further generalizations of these results are proved,
among which are the following:
\begin{enumerate}[label=\upshape(\roman*), leftmargin=*, widest=iii]
\item For all infinite sets $M$ and all $n\in\omega$, there are no finite-to-one functions from $\scrP(M)$ to $M^n$ or to~$[M]^n$.\label{s012}
\item For all infinite sets $M$, there are no finite-to-one functions from $\scrP(M)$ to $\omega\times M$.
\item For all infinite sets $M$ and all sets $N$, if there is a finite-to-one function from $N$ to~$M$,
      then there are no surjections from $N$ onto~$\scrP(M)$.\label{s013}
\end{enumerate}

For a set~$M$, let $\fin(M)$ denote the set of all finite subsets of~$M$.
Although it can be proved in $\mathsf{ZF}$ that, for all infinite sets~$M$,
there are no injections from $\scrP(M)$ into $\fin(M)$ (cf.~\cite[Theorem~3]{HalbeisenShelah1994}),
the existence of an infinite set $A$ such that there is a finite-to-one function from $\scrP(A)$ to $\fin(A)$
and such that there is a surjection from $\fin(A)$ onto $\scrP(A)$ is consistent with $\mathsf{ZF}$
(cf.~\cite[Remark~3.10]{Shen2017} and \cite[Theorem~1]{HalbeisenShelah1994}).
Now it is natural to ask whether the existence of an infinite set $A$ such that
there is a surjection from $A^2$ onto $\scrP(A)$ or from $[A]^2$ onto $\scrP(A)$
is consistent with~$\mathsf{ZF}$, and these questions are originally asked in~\cite{Truss1973}
and in~\cite{Halbeisen2018} respectively. In~\cite[Question~5.6]{ShenYuan2020}, it is asked whether
the existence of an infinite set $A$ such that there is a surjection from $\omega\times A$ onto $\scrP(A)$
is consistent with~$\mathsf{ZF}$, and it is noted there that an affirmative answer to this question
would yield affirmative answers to the above two questions. In this paper,
we give a negative answer to this question; that is, we prove in $\mathsf{ZF}$ that,
for all infinite sets~$M$, there are no surjections from $\omega\times M$ onto~$\scrP(M)$.
We also obtain some related results.

\section{Preliminaries}
In this section, we indicate briefly our use of some terminology and notation.
For a function~$f$, we use $\dom(f)$ for the domain of~$f$, $\ran(f)$ for the range of~$f$,
$f[A]$ for the image of $A$ under~$f$, $f^{-1}[A]$ for the inverse image of $A$ under~$f$,
and $f\rstr A$ for the restriction of $f$ to~$A$.
For functions $f$ and~$g$, we use $g\circ f$ for the composition of $g$ and~$f$.
We write $f:A\to B$ to express that $f$ is a function from $A$ to $B$,
and $f:A\twoheadrightarrow B$ to express that $f$ is a function from $A$ \emph{onto} $B$.

\begin{definition}
Let $A,B$ be arbitrary sets.
\begin{enumerate}[leftmargin=*, widest=1]
\item $A\preccurlyeq B$ means that there exists an injection from $A$ into~$B$.
\item $A\preccurlyeq^\ast B$ means that there exists a surjection from a subset of $B$ onto~$A$.
\item $\fin(A)$ denotes the set of all finite subsets of~$A$.
\item $\scrPI(A)$ denotes the set of all infinite subsets of~$A$.
\end{enumerate}
\end{definition}

Clearly, if $A\preccurlyeq B$ then $A\preccurlyeq^\ast B$,
and if $A\preccurlyeq^\ast B$ then $\scrP(A)\preccurlyeq\scrP(B)$ and $\scrPI(A)\preccurlyeq\scrPI(B)$.

\begin{fact}\label{s001}
$\omega_1\preccurlyeq^\ast\scrP(\omega)$.
\end{fact}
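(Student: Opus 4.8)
The plan is to exhibit an explicit subset of $\scrP(\omega)$ together with a canonical surjection onto $\omega_1$, obtained by coding each countable ordinal by a well-ordering of a subset of $\omega$. The decisive feature is that the surjection runs \emph{from} codes \emph{to} ordinals: each code determines a unique order type, and every countable ordinal arises as the order type of some code directly from the definition of $\omega_1$. Consequently no selection is involved and the argument goes through in $\mathsf{ZF}$.

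First I would fix, once and for all, a bijection $p\colon\omega\times\omega\to\omega$ (a pairing function, available in $\mathsf{ZF}$), which induces a bijection between $\scrP(\omega\times\omega)$ and $\scrP(\omega)$ sending each relation $R\subseteq\omega\times\omega$ to $p[R]\in\scrP(\omega)$. Next I would let $W$ be the set of all $R\subseteq\omega\times\omega$ that are well-orderings of their field (a subset of $\omega$), and set $A=\{p[R]:R\in W\}\subseteq\scrP(\omega)$.

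Then I would define $f\colon A\to\omega_1$ by $f(p[R])=\mathrm{ot}(R)$, the order type of the well-ordering $R$. This is well-defined because $p$ is injective, so $R$ is recovered from $p[R]$ as $p^{-1}[p[R]]$, and the order type of a well-ordering is a definite countable ordinal. Surjectivity is immediate: for every $\alpha<\omega_1$, the ordinal $\alpha$ is by definition countable, so there is a well-ordering of a subset of $\omega$ (indeed of $\omega$ itself when $\alpha$ is infinite) whose order type is $\alpha$; any such $R\in W$ yields $f(p[R])=\alpha$. Hence $f$ is a surjection from the subset $A$ of $\scrP(\omega)$ onto $\omega_1$, which is exactly the assertion $\omega_1\preccurlyeq^\ast\scrP(\omega)$.

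The one point demanding care — and the reason the argument needs no choice — is that surjectivity must not covertly pick a well-ordering for each ordinal. It does not: surjectivity is merely the existential claim that each $\alpha<\omega_1$ has at least one preimage, which follows from the meaning of ``countable ordinal,'' while $f$ itself is defined uniformly from the single fixed pairing function $p$. I therefore expect no genuine obstacle; the only substantive task is to be explicit about the coding and to flag this choice-freeness.
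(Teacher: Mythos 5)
Your proof is correct and is essentially the standard argument behind the paper's citation (Halbeisen, Theorem~5.11): code well-orderings $R\subseteq\omega\times\omega$ of subsets of $\omega$ as elements of $\scrP(\omega)$ via a fixed pairing bijection, and map each code to its order type, with surjectivity following choice-freely from the definition of $\omega_1$ as the set of countable ordinals. The paper gives no proof of its own beyond this reference, so there is nothing it does that you have missed.
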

\begin{proof}
Cf.~\cite[Theorem~5.11]{Halbeisen2017}.
\end{proof}

In the sequel, we shall frequently use expressions like ``one can explicitly define'' in our formulations,
which is illustrated by the following example.

\begin{theorem}[Cantor-Bernstein]\label{cbt}
From injections $f:A\to B$ and $g:B\to A$,
one can explicitly define a bijection $h:A\twoheadrightarrow B$.
\end{theorem}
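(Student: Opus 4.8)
The plan is to give the classical explicit construction of the Cantor--Bernstein bijection by a back-chaining argument, taking care that every ingredient is determined by $f$, $g$, $A$, and $B$ alone, so that no appeal to choice is ever made. First I would isolate the part of $A$ lying outside the range of $g$: set $C_0=A\setminus g[B]$, define recursively $C_{n+1}=g[f[C_n]]$ for each $n\in\omega$, and put $C=\bigcup_{n\in\omega}C_n$. Intuitively $C$ collects the points of $A$ whose ``backward orbit'' under $g$ eventually escapes $\ran(g)$, and by construction $C$ is closed under the operation $x\mapsto g(f(x))$. The bijection is then defined by cases: $h(x)=f(x)$ if $x\in C$, and $h(x)=g^{-1}(x)$ if $x\in A\setminus C$.

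The first thing to check is that $h$ is well defined, which reduces to noting that $g^{-1}$ makes sense on $A\setminus C$. Since $C_0\subseteq C$, we have $A\setminus C\subseteq A\setminus C_0=g[B]=\ran(g)$, and as $g$ is injective, $g^{-1}(x)$ is a single element of $B$ for each such $x$. For injectivity I would split on whether the two arguments lie in $C$: within $C$ this follows from injectivity of $f$, outside $C$ from injectivity of $g$, and the only delicate case is a mixed one, say $x_1\in C$ and $x_2\in A\setminus C$ with $f(x_1)=g^{-1}(x_2)$. Applying $g$ gives $x_2=g(f(x_1))$, and since $x_1\in C_n$ for some $n$ we obtain $x_2\in C_{n+1}\subseteq C$, contradicting $x_2\notin C$. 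For surjectivity I would fix $y\in B$ and distinguish whether $y\in f[C]$: if so, then $y=f(x)=h(x)$ for some $x\in C$; if not, I claim $g(y)\notin C$, since otherwise $g(y)\in\ran(g)$ rules out membership in $C_0$, so $g(y)\in C_{n+1}=g[f[C_n]]$ for some $n$, whence $g(y)=g(f(x))$ with $x\in C_n$, and injectivity of $g$ forces $y=f(x)\in f[C]$, a contradiction; therefore $h(g(y))=g^{-1}(g(y))=y$.

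The argument involves no serious obstacle beyond this bookkeeping, and the point worth emphasizing is precisely the one the theorem is meant to advertise: the recursion defining the $C_n$ is an ordinary recursion on $\omega$ that uses only the given data, and the case split defining $h$ refers only to membership in the fixed set $C$. Consequently the whole construction of $h$ is explicit and uniform in the pair $(f,g)$, with no choices made anywhere, which is exactly the sense in which the statement holds in $\mathsf{ZF}$.
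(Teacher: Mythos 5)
Your construction is correct and is precisely the standard explicit Cantor--Bernstein argument (the chain decomposition $C_0=A\setminus g[B]$, $C_{n+1}=g[f[C_n]]$) that the paper itself does not spell out but delegates to Levy \cite[III.2.8]{Levy1979}; all the verifications (well-definedness via $A\setminus C\subseteq\ran(g)$, the mixed injectivity case, and the surjectivity case split on $y\in f[C]$) check out. You also correctly address the one point the theorem actually emphasizes, namely that the whole definition is uniform in $(f,g)$ with no appeal to choice, so the statement holds in the strong ``class function'' sense the paper requires.
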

\begin{proof}
Cf.~\cite[III.2.8]{Levy1979}.
\end{proof}

\noindent
Formally, Theorem~\ref{cbt} states that one can find a class function $H$ without free variables
such that, whenever $f$ is an injection from $A$ into $B$ and $g$ is an injection from $B$ into~$A$,
$H(f,g)$ is defined and is a bijection of $A$ onto~$B$.

We say that a set $M$ is \emph{Dedekind infinite} if there exists a bijection from $M$ onto some proper subset of~$M$;
otherwise $M$ is \emph{Dedekind finite}. It is well-known that $M$ is Dedekind infinite if and only if there exists
an injection from $\omega$ into~$M$. We say that a set $M$ is \emph{power Dedekind infinite} if the power set of $M$
is Dedekind infinite; otherwise $M$ is \emph{power Dedekind finite}. Recall Kuratowski's celebrated theorem:

\begin{theorem}[Kuratowski]\label{kurt}
A set $M$ is power Dedekind infinite if and only if there exists a surjection from $M$ onto~$\omega$.
\end{theorem}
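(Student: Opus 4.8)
The plan is to prove both implications of Kuratowski's equivalence, with the forward implication being routine and the reverse one carrying all the difficulty. For the easy direction, suppose $f\colon M\twoheadrightarrow\omega$. Then the fibres $f^{-1}[\{n\}]$ for $n\in\omega$ are pairwise distinct nonempty subsets of $M$, so $n\mapsto f^{-1}[\{n\}]$ is an injection of $\omega$ into $\scrP(M)$; hence $\scrP(M)$ is Dedekind infinite, i.e.\ $M$ is power Dedekind infinite. (It also suffices, for the converse, to produce a surjection from a subset of $M$ onto $\omega$, since such a map extends to all of $M$ by sending the remaining points to $0$.)

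For the hard direction, assume $M$ is power Dedekind infinite, so that there is an injection $\omega\to\scrP(M)$ giving a sequence $\langle A_n:n\in\omega\rangle$ of pairwise distinct subsets of $M$. I would attach to each $x\in M$ its \emph{trace} $\phi(x)=\{n\in\omega:x\in A_n\}\subseteq\omega$, and for each $n$ let $\Pi_n$ be the partition of $M$ into the fibres of $x\mapsto\phi(x)\cap n$; thus $\Pi_{n+1}$ refines $\Pi_n$, each block is coded by a string in $2^{n}$, and $\Pi_n$ has $c_n:=|\Pi_n|$ blocks. The first step is to show $c_n\to\infty$: if the $c_n$ were bounded they would stabilise, and beyond the stabilising level no block splits, so every $A_m$ would be a union of the finitely many eventual blocks, forcing only finitely many distinct $A_m$, a contradiction.

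The remaining task is to manufacture a surjection $M\twoheadrightarrow\omega$ from these refining partitions, and this is where I expect the real work. I would organise the blocks as the nodes of an ordered, binary-branching prefix tree $\mathcal T\subseteq 2^{<\omega}$ (the realised initial segments of the traces), so that $c_n\to\infty$ forces $\mathcal T$ to have infinitely many branching nodes, necessarily of unbounded length. Calling $\phi(x)\cap k$ a \emph{branching prefix} of $x$ when it is a branching node, I would argue by a choice-free counting on the (again binary) forest of branching nodes that one of two things must happen: either some trace has infinitely many branching prefixes, or every $x$ has only finitely many but their number is unbounded over $M$. In the first case, fixing such an $x_0$, the first-disagreement map $x\mapsto\min\bigl(\phi(x)\triangle\phi(x_0)\bigr)$ has range exactly the infinite set of lengths of branching prefixes of $x_0$; in the second case the counting map $x\mapsto(\text{number of branching prefixes of }x)$ has infinite range. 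Either way $M$ maps onto an infinite subset of $\omega$, which we re-index to obtain a surjection onto $\omega$.

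The main obstacle is precisely this last construction: in $\mathsf{ZF}$ we may neither choose representatives from the blocks nor extract a monotone subsequence of the $A_n$, so the naive recipes — the least index on which $x$ enters some $A_n$, or a lexicographic rank of $\phi(x)$ — can easily have finite range. What rescues the argument is that at each finite level everything is genuinely finite and ordered, and that an infinite binary tree must distribute its infinitely many branchings so as to yield unbounded branching depth: a fact provable without any appeal to K\"onig's lemma or to the axiom of choice.
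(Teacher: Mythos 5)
Your proof is correct, and it is worth noting that the paper itself supplies no argument for this theorem: its ``proof'' is a citation of Halbeisen's Proposition~5.4, so your self-contained derivation is necessarily a different route. The one step you leave as a gesture --- the ``choice-free counting on the forest of branching nodes'' --- does go through, and here is why: any two incomparable branching nodes of $\mathcal{T}$ have a longest common prefix which is again a branching node, so the branching nodes form a tree under end-extension in which every node has at most two immediate successors; moreover, any finite chain of branching nodes lies below its top node, which is realised as a prefix of $\phi(x)$ for some $x\in M$, so all members of the chain are branching prefixes of that single $x$. Hence if every $x$ had at most $N$ branching prefixes, this binary tree would have height at most $N$ and thus at most $2^{N}-1$ nodes, contradicting the fact that $c_{n+1}-c_n$ counts the level-$n$ branchings and $c_n\to\infty$ supplies infinitely many of them. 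Also, fixing a single $x_0$ in your first case is mere existential instantiation, not an appeal to choice, and it is legitimate here because the theorem (unlike several lemmas of this paper) asserts bare existence rather than ``one can explicitly define''. For comparison, the standard argument (essentially Halbeisen's) compresses your dichotomy into one uniform construction: since $c_n\to\infty$, the set $R$ of traces is infinite, and one defines by recursion a branch $b\in 2^{\omega}$ by always passing to the leftmost one-step extension that is a prefix of infinitely many traces (a definable two-way choice, so no $\mathsf{AC}$); the first-disagreement map $x\mapsto\min\bigl(\phi(x)\,\triangle\,b\bigr)$ (with $\phi(x)=b$ sent to $0$) then has unbounded range, because if it were bounded by $N$, at most one trace, namely $b$ itself, could extend $b\rstr(N+1)$, contradicting the recursion invariant. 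What this buys over your version is the elimination of both the case split and the instantiation of $x_0$: composed with the increasing enumeration of its range, it \emph{explicitly defines} a surjection from the witnessing family $\langle A_n\rangle_{n\in\omega}$, i.e., it yields the uniform form that the paper's conventions elsewhere favour; what your version buys is a more transparent combinatorial picture of where the unboundedness comes from.
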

\begin{proof}
Cf.~\cite[Proposition~5.4]{Halbeisen2017}.
\end{proof}

\section{The main theorem}
In this section, we prove our main theorem, which states that, for all infinite sets~$M$,
there are no surjections from $\omega\times M$ onto~$\scrP(M)$. We first recall the proof of Cantor's theorem.

\begin{theorem}[Cantor]\label{cnt}
From a function $f:M\to\scrP(M)$, one can explicitly define an $A\in\scrP(M)\setminus\ran(f)$.
\end{theorem}
\begin{proof}
It suffices to take $A=\{x\in\dom(f)\mid x\notin f(x)\}$.
\end{proof}

\begin{lemma}\label{s002}
From an infinite ordinal $\alpha$, one can explicitly define an injection $f:\alpha\times\alpha\to\alpha$.
\end{lemma}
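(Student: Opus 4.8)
The plan is to pass through the canonical well-ordering of pairs of ordinals and the classical computation of $|\kappa\times\kappa|$ for cardinals, and then to reduce an arbitrary infinite ordinal to its cardinality. First I would introduce the G\"odel pairing: well-order $\mathrm{Ord}\times\mathrm{Ord}$ by declaring $(\xi_0,\eta_0)$ to precede $(\xi_1,\eta_1)$ when $\max(\xi_0,\eta_0)<\max(\xi_1,\eta_1)$, or these maxima agree and $(\xi_0,\eta_0)$ precedes $(\xi_1,\eta_1)$ lexicographically. This is an explicitly definable well-ordering, so its transitive collapse $\Gamma$ is an explicitly definable class function, and for each ordinal $\alpha$ the restriction $\Gamma\rstr(\alpha\times\alpha)$ is an order isomorphism onto some ordinal $\gamma_\alpha$. (Equivalently one could use the anti-lexicographic order, which identifies $\alpha\times\alpha$ with the ordinal product $\alpha\cdot\alpha$; I prefer the G\"odel ordering because it behaves well on cardinals.)

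The second step is to show, by transfinite induction on infinite cardinals $\kappa$, that $\gamma_\kappa=\kappa$, so that $\Gamma\rstr(\kappa\times\kappa)$ is an explicit bijection from $\kappa\times\kappa$ onto $\kappa$. The key point is that every $(\xi,\eta)\in\kappa\times\kappa$ has fewer than $\kappa$ predecessors: they all lie in $\delta\times\delta$ where $\delta=\max(\xi,\eta)+1<\kappa$, and $|\delta\times\delta|<\kappa$ either because $\delta$ is finite or, when $\delta$ is infinite, by the induction hypothesis applied to the smaller cardinal $|\delta|$. Hence every ordinal below $\gamma_\kappa$ has cardinality strictly less than $\kappa$, forcing $\gamma_\kappa\le\kappa$; the reverse inequality comes from the diagonal injection $\xi\mapsto(\xi,0)$. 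Both the collapse and this induction are explicit, as required.

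Now, given an arbitrary infinite ordinal $\alpha$, set $\kappa=|\alpha|$, so that $\kappa\le\alpha$ and $\alpha<\kappa^+$, where $\kappa^+$ denotes the least cardinal exceeding $\kappa$. If I have an explicitly defined injection $e\colon\alpha\to\kappa$, then the map $f=(\Gamma\rstr(\kappa\times\kappa))\circ(e\times e)$, sending $(\xi,\eta)$ to $\Gamma(e(\xi),e(\eta))$, is an injection of $\alpha\times\alpha$ into $\kappa$, and since $\kappa\subseteq\alpha$ this is the desired injection $f\colon\alpha\times\alpha\to\alpha$.

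The main obstacle, and the one genuinely delicate ingredient, is the explicit injection $e\colon\alpha\to\kappa$, i.e.\ a uniform explicit bijection between an ordinal and its cardinality. I would obtain it by transfinite recursion on $\beta\in[\kappa,\kappa^+)$, proving that each such $\beta$ admits an explicit bijection onto $\kappa$. The successor step is a routine finite shift exploiting that $\kappa$ is infinite. The hard case is a limit $\beta$: there I must read off from $\beta$ \emph{canonically} a decomposition into at most $\kappa$ many intervals, each of order type below $\beta$ (equivalently, an explicit cofinal sequence of length at most $\kappa$), and then splice the inductively given bijections together through the cardinal pairing of the second step. The difficulty is that ordinal fixed points such as $\beta=\kappa^{\omega}$, or base-$\kappa$ epsilon numbers, defeat naive ``divide by $\kappa$'' reductions, so the cofinal decomposition cannot be produced by blind recursion and must instead be extracted from the intrinsic structure of $\beta$. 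Once this collapse is in hand, the three steps assemble into the injection claimed by the lemma.
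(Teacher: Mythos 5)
Your first two steps are fine: the G\"odel ordering is definable, its collapse is explicit, and the induction showing $\Gamma[\kappa\times\kappa]=\kappa$ for infinite cardinals $\kappa$ is the standard argument. But the proof as written has a genuine gap, and you have named it yourself: everything hinges on an explicit injection $e\colon\alpha\to|\alpha|$, and you never construct it. Your sketch ``by transfinite recursion on $\beta\in[\kappa,\kappa^+)$'' founders exactly where you say it does --- at limit $\beta$ the recursion needs a \emph{canonical} cofinal decomposition of $\beta$ into $\le\kappa$ shorter intervals, and fixed points of the base-$\kappa$ operations ($\beta=\kappa\cdot\beta$, base-$\kappa$ epsilon numbers, etc.) show that no such decomposition falls out of division by $\kappa$. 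Noting that the naive attempts fail is not the same as exhibiting one that succeeds; as it stands your argument proves the lemma only when $\alpha$ is itself a cardinal. (Nor can you fall back on ``some bijection $\alpha\to|\alpha|$ exists, pick one'': without choice there is no definable selection from the set of such bijections, and ``explicitly define'' in this paper means a single class function of $\alpha$, which is precisely what the recursion was supposed to deliver.)

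The proofs the paper cites (Specker~2.1, Levy~IV.2.24) sidestep the cardinality collapse entirely, and this is worth internalizing: they work with the base-$\omega$ Cantor normal form rather than base-$\kappa$ division. Given infinite $\alpha$, let $\gamma$ be largest with $\omega^\gamma\le\alpha$, so $\alpha=\omega^\gamma\cdot n+\rho$ with $n<\omega$ and $\rho<\omega^\gamma$ --- here the ``quotient'' $n$ is finite, so there is no fixed-point obstruction; additively indecomposable ordinals like epsilon numbers are the \emph{good} case, not the bad one. One then injects $\omega^\gamma\times\omega^\gamma$ into $\omega^\gamma$ directly: two ordinals below $\omega^\gamma$ have normal forms with exponents below $\gamma$ and finitely many nonzero (finite) coefficients, and pairing the coefficients exponentwise by a fixed pairing $J\colon\omega\times\omega\to\omega$ with $J(0,0)=0$ yields an ordinal still below $\omega^\gamma$ (this is the closure of $\omega^\gamma$ under natural sums, the same circle of facts as the paper's Fact~\ref{s005}). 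Splicing the finitely many blocks of $\alpha$ into $\omega^\gamma$ then gives an explicit injection $\alpha\times\alpha\to\omega^\gamma\subseteq\alpha$, all uniformly in $\alpha$. Note also that you cannot repair your step~3 by invoking the paper's Lemma~\ref{s004} (the explicit bijection $\omega^\alpha\to\alpha$, which would tame the indecomposable case): that lemma is proved \emph{from} the present one, so using it here would be circular. Either adopt the normal-form pairing outright, or supply the missing canonical collapse; until then the proposal is incomplete.
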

\begin{proof}
Cf.~\cite[2.1]{Specker1954} or \cite[IV.2.24]{Levy1979}.
\end{proof}

\begin{lemma}\label{s003}
From an infinite ordinal~$\alpha$, one can explicitly define an injection $f:\fin(\alpha)\to\alpha$.
\end{lemma}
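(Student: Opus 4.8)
The plan is to reduce finite subsets of $\alpha$ to a coding by tuples and then apply Lemma~\ref{s002} (pairing on ordinals). Concretely, I would first handle the trivial bookkeeping: the empty set $\emptyset\in\fin(\alpha)$ must be assigned some value, say $0$, and the singletons $\{\xi\}$ can be dealt with together with the general case. The heart of the matter is that a nonempty finite subset $s\subseteq\alpha$ of size $n$ can be listed in strictly increasing order as $s=\{\xi_0<\xi_1<\cdots<\xi_{n-1}\}$; this listing is canonical (it uses only the well-order of~$\alpha$, no choice), so it is explicitly definable from~$s$. Thus $\fin(\alpha)\setminus\{\emptyset\}$ is in explicit bijection with the set of strictly increasing finite tuples from~$\alpha$.

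Next I would turn such a tuple into a single ordinal. By Lemma~\ref{s002} there is an explicitly defined injection $p:\alpha\times\alpha\to\alpha$. The idea is to iterate $p$ to code tuples of any finite length: define $p_1(\xi_0)=\xi_0$ and recursively $p_{n+1}(\xi_0,\dots,\xi_n)=p\bigl(p_n(\xi_0,\dots,\xi_{n-1}),\xi_n\bigr)$, so that each $p_n$ is an injection from $\alpha^n$ into~$\alpha$. A tuple of length~$n$ is then coded by the single ordinal $p_n(\xi_0,\dots,\xi_{n-1})$. The only remaining subtlety is that different lengths could collide, so I would record the length as well: map the increasing tuple $(\xi_0,\dots,\xi_{n-1})$ to the pair $\bigl(n,\,p_n(\xi_0,\dots,\xi_{n-1})\bigr)\in\omega\times\alpha$, which is injective because $n$ recovers the length and then $p_n$ (being injective) recovers the tuple. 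Since $\alpha$ is infinite we have $\omega\preccurlyeq\alpha$, so $\omega\times\alpha\preccurlyeq\alpha\times\alpha$, and one more application of~$p$ collapses this pair into a single element of~$\alpha$.

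Assembling these explicit maps gives the desired injection $f:\fin(\alpha)\to\alpha$: send $\emptyset$ to a reserved value and send each nonempty $s$ to $p\bigl(\iota(|s|),\,p_{|s|}(\text{increasing listing of }s)\bigr)$, where $\iota:\omega\to\alpha$ is the canonical inclusion. Each ingredient—the increasing enumeration, the iterated pairing $p_n$, the length tag, and the final collapse—is defined without any arbitrary choices, so the whole composite is an explicit class function of~$\alpha$, as required by the ``one can explicitly define'' formulation.

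The step I expect to be the main obstacle, or at least the one demanding the most care, is handling the variable arity: a single element of $\alpha$ must encode both the length of the tuple and its entries without collisions, and one must verify injectivity across tuples of \emph{different} lengths, not merely within a fixed length. Tagging by $n$ and using a fresh pairing at the top resolves this cleanly, but it is the place where a careless construction would fail.
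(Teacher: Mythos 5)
Your construction is correct and is essentially the argument behind the paper's own proof, which simply cites Halbeisen (Theorem~5.19): code a nonempty finite set by its canonical increasing enumeration, inject tuples via iterated pairing from Lemma~\ref{s002}, tag with the length, and collapse $\omega\times\alpha$ into~$\alpha$. One micro-fix: rather than sending $\emptyset$ to a raw ``reserved value'' such as~$0$ (an arbitrary injection $p$ from Lemma~\ref{s002} might also output that ordinal on some length-tagged tuple), send it to $p(0,0)$, i.e.\ give $\emptyset$ the length tag~$0$; then injectivity follows from the tag uniformly in every case.
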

\begin{proof}
Cf.~\cite[Theorem~5.19]{Halbeisen2017}.
\end{proof}

\begin{lemma}\label{s004}
From an infinite ordinal~$\alpha$, one can explicitly define a bijection $f:\omega^\alpha\twoheadrightarrow\alpha$.
\end{lemma}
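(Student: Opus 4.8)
The plan is to construct injections in both directions between $\omega^\alpha$ and $\alpha$ and then feed them into Cantor--Bernstein (Theorem~\ref{cbt}) to obtain an explicit bijection. One direction is immediate: since $\omega\geq 2$, the standard ordinal inequality $\alpha\leq\omega^\alpha$ holds, so $\alpha\subseteq\omega^\alpha$ as sets of ordinals and the inclusion map $\iota:\alpha\to\omega^\alpha$, $\iota(\xi)=\xi$, is an injection. The substance of the proof lies in producing an injection $\omega^\alpha\to\alpha$.

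For this I would use the Cantor normal form, which is available in $\mathsf{ZF}$ because it rests only on the well-ordering of the ordinals and requires no choice. Every ordinal $\gamma<\omega^\alpha$ has a unique representation $\gamma=\omega^{\beta_1}c_1+\cdots+\omega^{\beta_k}c_k$ with $\alpha>\beta_1>\cdots>\beta_k\geq 0$ and each $c_i\in\omega\setminus\{0\}$; crucially, all exponents $\beta_i$ are $<\alpha$. Since the $\beta_i$ are pairwise distinct, the finite set $\{(\beta_1,c_1),\dots,(\beta_k,c_k)\}$ recovers the whole representation (its order is read off from the strictly decreasing first coordinates), so $\gamma\mapsto\{(\beta_1,c_1),\dots,(\beta_k,c_k)\}$ is a well-defined injection from $\omega^\alpha$ into $\fin(\alpha\times\omega)$.

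It then remains to bound $\fin(\alpha\times\omega)$ by $\alpha$, and here the earlier lemmas do the work. Because $\alpha$ is infinite one has $\omega\subseteq\alpha$, so $\alpha\times\omega\preccurlyeq\alpha\times\alpha$, and composing with the pairing injection of Lemma~\ref{s002} yields an injection $g:\alpha\times\omega\to\alpha$. Pushing finite sets forward along $g$, that is $s\mapsto g[s]$, gives an injection $\fin(\alpha\times\omega)\to\fin(\alpha)$, and Lemma~\ref{s003} supplies an injection $\fin(\alpha)\to\alpha$. Concatenating these with the Cantor normal form coding produces the desired injection $\omega^\alpha\to\alpha$. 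Feeding this injection together with $\iota$ into Theorem~\ref{cbt} returns a bijection $f:\omega^\alpha\twoheadrightarrow\alpha$, and since every ingredient—the normal form, the maps of Lemmas~\ref{s002} and~\ref{s003}, and the Cantor--Bernstein construction—is explicit, so is $f$.

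I expect the only delicate point to be the verification that the Cantor normal form, together with its existence and uniqueness, is genuinely choice-free and uniformly definable from $\alpha$; once that is granted, the assembly of the injections and the appeal to Cantor--Bernstein are entirely routine.
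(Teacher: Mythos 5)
Your proof is correct and takes essentially the same route as the paper's: the paper realizes your Cantor normal form coding as the order isomorphism of $\omega^\alpha$ onto the set $\exp(\omega,\alpha)$ of finite-support functions $\alpha\to\omega$ under the right lexicographic ordering, then likewise injects into $\fin(\alpha\times\omega)$, composes with the injections of Lemmas~\ref{s002} and~\ref{s003}, and applies Theorem~\ref{cbt}, the only cosmetic difference being the reverse injection (your inclusion $\iota$ versus the paper's $\gamma\mapsto\omega^\gamma$). Your one flagged worry is harmless: existence and uniqueness of the Cantor normal form is pure ordinal arithmetic by transfinite recursion on the well-ordered class of ordinals, provable in $\mathsf{ZF}$ and uniformly definable from~$\alpha$.
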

\begin{proof}
Let $\alpha$ be an infinite ordinal. Let
\[
\exp(\omega,\alpha)=\bigl\{t:\alpha\to\omega\bigm|\{\gamma<\alpha\mid t(\gamma)\neq 0\}\text{ is finite}\bigr\}
\]
and let $r$ be the right lexicographic ordering of $\exp(\omega,\alpha)$.
It is easy to verify that $r$ well-orders $\exp(\omega,\alpha)$ and
the order type of $\langle\exp(\omega,\alpha),r\rangle$ is $\omega^\alpha$ (cf.~\cite[IV.2.10]{Levy1979}).
Let $g$ be the unique isomorphism of $\langle\omega^\alpha,\in\rangle$ onto $\langle\exp(\omega,\alpha),r\rangle$.
Let $h$ be the function on $\exp(\omega,\alpha)$ defined by
\[
h(t)=t\rstr\{\gamma<\alpha\mid t(\gamma)\neq 0\}.
\]
Then $h$ is an injection from $\exp(\omega,\alpha)$ into $\fin(\alpha\times\omega)$.
By Lemmas~\ref{s002} and \ref{s003}, we can explicitly define an injection $p:\fin(\alpha\times\omega)\to\alpha$.
Therefore, $p\circ h\circ g$ is an injection from $\omega^\alpha$ into~$\alpha$.
Now, since the function that maps each $\gamma<\alpha$ to $\omega^\gamma$ is an injection from $\alpha$ into~$\omega^\alpha$,
it follows from Theorem~\ref{cbt} that we can explicitly define a bijection $f:\omega^\alpha\twoheadrightarrow\alpha$.
\end{proof}

\begin{fact}\label{s005}
If $A=B\cup C$ is a set of ordinals which is of order type~$\omega^\delta$,
then either $B$ or $C$ has order type~$\omega^\delta$.
\end{fact}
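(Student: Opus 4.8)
The plan is to exploit the fact that the ordinals $\omega^\delta$ are precisely the additively indecomposable ordinals, so that a well-order of type $\omega^\delta$ cannot be covered by two sets both of strictly smaller order type. First I would reduce to a convenient normal form. Replacing $A$ by its order type via the unique order isomorphism of $A$ onto $\omega^\delta$ turns $B$ and $C$ into subsets of $\omega^\delta$ whose union is $\omega^\delta$, with all order types preserved; so I may assume $A=\omega^\delta$. I would also arrange that $B$ and $C$ are disjoint, by replacing $C$ with $C\setminus B$: this leaves the union unchanged, and since $C\setminus B\subseteq C\subseteq\omega^\delta$, if the smaller set already has order type $\omega^\delta$ then so does $C$. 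Thus it suffices to treat a disjoint cover $\omega^\delta=B\sqcup C$.

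For the heart of the argument I would suppose toward a contradiction that $\operatorname{otp}(B)=\beta<\omega^\delta$ and $\operatorname{otp}(C)=\gamma<\omega^\delta$, and combine two standard facts of ordinal arithmetic. First, the order type of a disjoint union of two sets of ordinals is bounded by the natural (Hessenberg) sum of the order types of the pieces, that is $\operatorname{otp}(B\cup C)\le\beta\oplus\gamma$, since the ordering that $B\cup C$ inherits is a single linear extension of the disjoint sum of the two chains, and $\beta\oplus\gamma$ is by definition the largest order type attainable by any such extension. Second, the ordinals below $\omega^\delta$ are closed under natural sum: $\beta,\gamma<\omega^\delta$ means every exponent occurring in the Cantor normal forms of $\beta$ and $\gamma$ is $<\delta$, and $\oplus$ merely merges these normal forms, so $\beta\oplus\gamma<\omega^\delta$. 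Together these give $\omega^\delta=\operatorname{otp}(B\cup C)\le\beta\oplus\gamma<\omega^\delta$, the desired contradiction.

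An alternative and more elementary route, avoiding the machinery of the natural sum, is transfinite induction on $\delta$. The successor case writes $\omega^{\epsilon+1}$ as $\omega$ consecutive blocks each of type $\omega^\epsilon$; the induction hypothesis lets $B$ or $C$ capture the full type $\omega^\epsilon$ inside each block, and by an explicitly defined two-colouring of the blocks one of $B,C$ succeeds in infinitely many of them, whose consecutive contributions sum to $\omega^\epsilon\cdot\omega=\omega^{\epsilon+1}$. The limit case is analogous, slicing $\omega^\delta$ into consecutive blocks of types $\omega^{\zeta}$ along a (canonically chosen) sequence cofinal in $\delta$, and again extracting a single one of $B,C$ that wins cofinally often.

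I expect the main obstacle to be precisely the limit step of this induction, equivalently the careful verification of the subadditivity ingredient: one must argue that among the $\operatorname{cf}(\delta)$-many blocks a single one of $B$ or $C$ succeeds \emph{cofinally} often, and that the cofinally-placed consecutive pieces it then captures really do reassemble to order type $\omega^\delta$. This rests only on the regularity of $\operatorname{cf}(\delta)$ together with additive indecomposability (every nonempty final segment of a set of type $\omega^\delta$ again has type $\omega^\delta$, since removing an initial segment of type $\xi<\omega^\delta$ leaves type $\eta$ with $\xi+\eta=\omega^\delta$, forcing $\eta=\omega^\delta$). As the entire argument concerns only ordinals, it requires no appeal to the axiom of choice.
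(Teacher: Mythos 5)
Your main argument is correct and complete. The paper itself offers no proof of this fact --- it simply cites Levy, \emph{Basic Set Theory}, IV.2.22(vii) --- so there is no in-paper argument to match; what you supply is the standard proof that the cited fact admits. Your two reductions (transporting everything along the isomorphism onto $\omega^\delta$, then disjointifying via $C\setminus B$, using monotonicity of order type for subsets of ordinals) are both sound, and the core inequality chain $\omega^\delta=\operatorname{otp}(B\cup C)\le\beta\oplus\gamma<\omega^\delta$ is exactly right: the bound $\operatorname{otp}(B\cup C)\le\operatorname{otp}(B)\oplus\operatorname{otp}(C)$ for disjoint sets of ordinals is where the natural sum (rather than the ordinary sum, for which the bound fails) is essential, and closure of $\omega^\delta$ under $\oplus$ follows from Cantor normal forms as you say. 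One small caveat: you assert that $\beta\oplus\gamma$ is ``by definition'' the largest order type of a linear extension of the disjoint sum of the two chains, while in the same breath you use the Cantor-normal-form description of $\oplus$ to get closure under $\oplus$; the equivalence of these two characterizations is a theorem (Carruth), not a definition, so strictly speaking you are invoking one more classical result --- alternatively, the inequality $\operatorname{otp}(B\cup C)\le\beta\oplus\gamma$ can be proved directly by induction on elements of $B\cup C$ with $\oplus$ defined via normal forms. Everything involved is choice-free, as the statement concerns only ordinals, so the $\mathsf{ZF}$ setting is respected. Your alternative inductive sketch is dispensable given the first argument, and you correctly identify its limit case as the delicate point; since the natural-sum proof stands on its own, I would simply drop the sketch.
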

\begin{proof}
Cf.~\cite[IV.2.22(vii)]{Levy1979}.
\end{proof}

The key step of our proof is the following lemma.

\begin{lemma}\label{s006}
From a surjection $f:\omega\times M\twoheadrightarrow\alpha$, where $\alpha$ is an uncountable ordinal,
one can explicitly define a surjection from $M$ onto~$\alpha$.
\end{lemma}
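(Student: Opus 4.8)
The plan is to first normalize the target ordinal and then to absorb the factor~$\omega$ by an induction on $\alpha$ driven by Fact~\ref{s005}. I would begin by reducing to the case that $\alpha=\omega^\delta$ is additively indecomposable: writing $\alpha$ in Cantor normal form and letting $\omega^\delta$ be its leading term gives $\omega^\delta\le\alpha<\omega^{\delta+1}=\omega^\delta\cdot\omega$, and uncountability of $\alpha$ forces $\delta\ge\omega_1$. Since $\omega^\delta\cdot\omega\preccurlyeq\omega^\delta\times\omega\preccurlyeq\omega^\delta\times\omega^\delta$, Lemma~\ref{s002} yields an explicit injection $\alpha\to\omega^\delta$; with the inclusion $\omega^\delta\to\alpha$ and Theorem~\ref{cbt} this produces an explicit bijection of $\alpha$ with $\omega^\delta$, and composing $f$ with the retraction of $\alpha$ onto its initial segment $\omega^\delta$ reduces everything to a surjection onto the indecomposable $\omega^\delta$. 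Henceforth write $A_n=f_n[M]=\{f(n,x):x\in M\}$ and $C_n=A_0\cup\cdots\cup A_n$, so that $\bigcup_nA_n=\alpha$ and the $C_n$ increase to~$\alpha$.

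The engine is a finite analogue of the lemma together with Fact~\ref{s005}. If a set $N$ admits a surjection from $N\sqcup N$ onto an indecomposable $\beta=\omega^\gamma$, realized by $s_0,s_1:N\to\beta$ with $\ran(s_0)\cup\ran(s_1)=\beta$, then Fact~\ref{s005} forces one of $\ran(s_0),\ran(s_1)$ to have order type $\beta$, and collapsing that range by its canonical order isomorphism gives $N\twoheadrightarrow\beta$; iterating handles any finite disjoint power of~$N$. Since $f\rstr(\{0,\dots,n\}\times M)$ exhibits a surjection from $n{+}1$ disjoint copies of $M$ onto $C_n$, a routine Cantor normal form reduction to the indecomposable leading term of $\mathrm{otp}(C_n)$ yields an explicit surjection $M\twoheadrightarrow C_n$ for every~$n$. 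In particular, if some $C_n$ already has order type $\alpha$ we are done; so the real content lies in the remaining case, where $\mathrm{otp}(C_n)<\alpha$ for all $n$ yet $\mathrm{otp}(C_n)\nearrow\alpha$. Note that this case forces $\alpha$ to have countable cofinality, witnessed explicitly by the sequence $b_k=\sup C_k\nearrow\alpha$.

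For this case I would argue by induction on $\alpha$, partitioning the \emph{domain} $M$ (over which combining surjections is free, the pieces being disjoint). A convenient tool is the observation that for any infinite ordinal $\mu$ a surjection $M\twoheadrightarrow\mu$ yields, through the explicit bijection $\mu\approx\omega\times\mu$ of Lemma~\ref{s002}, a surjection $M\twoheadrightarrow\omega\times\mu$, i.e.\ an $\omega$-indexed partition of $M$ each block of which still surjects onto $\mu$; in particular $M\twoheadrightarrow\omega$, so $M$ is power Dedekind infinite (Theorem~\ref{kurt}). Splitting $M$ according to the least $k$ with $f(n,x)<b_k$ for all $n$ confines every element whose value-set is bounded below $\alpha$ to a piece whose $f$-image has order type $<\alpha$, to which the induction hypothesis applies; these piecewise surjections, together with the countable pieces absorbed using $M\twoheadrightarrow\omega$, then reassemble into a surjection onto all of~$\alpha$.

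The hard part, and the step I expect to be the main obstacle, is the set $M^\ast$ of elements whose value-set $\{f(n,x):n\in\omega\}$ is \emph{cofinal} in $\alpha$. Such elements cannot be confined to a piece of small order type, while collectively their images may have order type $\alpha$, so the induction hypothesis does not apply to $M^\ast$ directly, and the naive remedies (selecting one value per element, or aligning a splitting of $M$ with the blocks) all founder on the impossibility of controlling the lower coordinates of each value-sequence. Because in $\mathsf{ZF}$ even $\omega_1$ may have cofinality $\omega$, this obstruction cannot be side-stepped by any regularity or cardinal-cofinality argument and must be met head-on: it is exactly the genuine difficulty of absorbing the factor $\omega$ into an uncountable ordinal of countable cofinality. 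I expect to resolve it by exploiting the indecomposability of $\alpha$ (Fact~\ref{s005}) applied to the cofinal value-structure of the members of $M^\ast$, in combination with the packing observation above, so as to redistribute $M^\ast$ into domain-pieces of order type $<\alpha$ and thereby close the induction.
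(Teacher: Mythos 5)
Your preparatory reductions are sound and explicitly definable: the normalization of $\alpha$ to its indecomposable leading term $\omega^\delta$ via Cantor normal form, Lemma~\ref{s002} and Theorem~\ref{cbt}; the ``engine'' extracting $M\twoheadrightarrow\beta$ from a surjection of finitely many copies of $M$ onto an indecomposable $\beta$ by iterating Fact~\ref{s005}; the resulting surjections $M\twoheadrightarrow C_n$; and the disposal of the case where some $C_n$ has order type $\alpha$. But the proof stops exactly where the content of the lemma begins. Your treatment of the remaining case rests on partitioning $M$ by the least $k$ bounding each element's value-set $\{f(n,x)\mid n\in\omega\}$, and you yourself concede that the set $M^\ast$ of elements with cofinal value-set is untouched by this partition; the final sentence (``I expect to resolve it by exploiting the indecomposability of $\alpha$\,\dots'') is a statement of intent, not an argument. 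Moreover, this is not a residual case that some refinement of your partition might shrink: nothing prevents $M^\ast=M$. Indeed, given any surjection $g:\omega\times M\twoheadrightarrow\alpha$ with $\operatorname{cf}(\alpha)=\omega$ witnessed by $\langle c_k\rangle_{k\in\omega}$ (and in $\mathsf{ZF}$ even $\alpha=\omega_1$ may be such), setting $f(2n,x)=g(n,x)$ and $f(2n+1,x)=c_n$ makes \emph{every} element's value-set cofinal in $\alpha$, so all your pieces $M_k$ are empty and the ``reassembly'' covers nothing. (A smaller inaccuracy: in your remaining case the sequence $b_k=\sup C_k$ need not witness countable cofinality, since some $b_k$ may already equal $\alpha$ while $\mathrm{otp}(C_k)<\alpha$.) So the entire lemma has been reduced to the one claim for which no proof is offered.

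It is worth recording how the paper's proof circumvents this obstruction, because it never classifies individual elements by their value-sets at all. It works with the column images $A_n=f[\{n\}\times M]$, sets $\delta_n=\mathrm{otp}(A_n)$ and $\delta=\bigcup_n\delta_n$, observes that $\omega\times\delta$ surjects onto $\alpha$ (so by Lemma~\ref{s002} it suffices to surject $M$ onto $\delta$), and, after the trivial case $\delta=\delta_n$, passes via Lemma~\ref{s004} to the indecomposables $\beta_n=\omega^{\delta_n}$ and $\beta=\omega^\delta$, so that each \emph{single} column already yields a surjection $h_n:M\twoheadrightarrow\beta_n$. The key construction then recursively carves a decreasing chain $M=B_0\supseteq B_1\supseteq\cdots$ together with surjections $q_n:B_n\setminus B_{n+1}\twoheadrightarrow\beta_n$, maintaining the invariant \eqref{s007} that the $\beta_m$ for which $h_m[B_n]$ still has full order type $\beta_m$ remain cofinal in $\beta$. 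At stage $n$ one picks a much larger $\beta_k$, splits $B_n$ along the initial segment $D=\{x\in B_n\mid t(h_k(x))<\beta_n\}$ of the collapsed image, and uses Fact~\ref{s005} to decide which half can be retained so that \eqref{s007} survives, while the \emph{discarded} half surjects onto $\beta_n$ either via $t\circ h_k$ directly or via the translate $[\beta_n,\beta_n\cdot 2)\subseteq\beta_k$ (here $\beta_n\cdot 2<\beta_k$ by indecomposability). Since the pieces $B_n\setminus B_{n+1}$ are pairwise disjoint, the $q_n$ glue to a surjection $h:M\twoheadrightarrow\beta$. Thus Fact~\ref{s005} is applied to images of shrinking \emph{domain pieces} under the fixed column maps, not to the cofinal value-structure of single elements; this domain-carving with invariant \eqref{s007} is precisely the mechanism missing from your sketch, and without it (or a genuine substitute) the proposal does not prove the lemma.
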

\begin{proof}
Let $\alpha$ be an uncountable ordinal and let $f$ be a surjection from $\omega\times M$ onto~$\alpha$.
For each $n\in\omega$, let $A_n=f[\{n\}\times M]$, let $\delta_n$ be the order type of~$A_n$,
and let $g_n$ be the unique isomorphism of $\delta_n$ onto~$A_n$.
Let $\delta=\bigcup_{n\in\omega}\delta_n$ and let $g$ be the function on $\omega\times\delta$ defined by
\[
g(n,\gamma)=
\begin{cases}
g_n(\gamma) & \text{if $\gamma<\delta_n$,}\\
0           & \text{otherwise.}
\end{cases}
\]
Then $g$ is a surjection from $\omega\times\delta$ onto~$\alpha$, which implies that $\delta$ is also an uncountable ordinal.
Hence, it follows from Lemma~\ref{s002} that we can explicitly define a surjection from $\delta$ onto~$\alpha$.
So it suffices to explicitly define a surjection from $M$ onto~$\delta$. We consider the following two cases:

\textsc{Case}~1. There exists an $n\in\omega$ such that $\delta=\delta_n$.
Now the function that maps each $x\in M$ to $g_k^{-1}(f(k,x))$ is a surjection of $M$ onto~$\delta$,
where $k$ is the least natural number such that $\delta=\delta_k$.

\textsc{Case}~2. Otherwise. Then $\delta$ is a limit ordinal. Since $\delta>\omega$,
without loss of generality, assume that $\delta_n$ is infinite for all $n\in\omega$.
For each $n\in\omega$, let $\beta_n=\omega^{\delta_n}$. By Lemma~\ref{s004},
for each $n\in\omega$, we can explicitly define a bijection $p_n:\beta_n\twoheadrightarrow\delta_n$.
For each $n\in\omega$, let $h_n$ be the function on $M$ defined by $h_n(x)=p_n^{-1}(g_n^{-1}(f(n,x)))$.
Then, for any $n\in\omega$, $h_n$ is a surjection from $M$ onto~$\beta_n$.
Let $\beta=\omega^\delta$. Clearly, $\beta=\bigcup_{n\in\omega}\beta_n$.
By Lemma~\ref{s004}, it suffices to explicitly define a surjection $h:M\to\beta$.

We first define by recursion two sequences $\langle B_n\rangle_{n\in\omega}$
and $\langle q_n\rangle_{n\in\omega}$ as follows. Let $B_0=M$.
Let $n\in\omega$ and assume that $B_n\subseteq M$ has been defined so that
\begin{equation}\label{s007}
\beta=\bigcup\bigl\{\beta_m\bigm|h_m[B_n]\text{ has order type }\beta_m\bigr\}.
\end{equation}
We define a subset $B_{n+1}$ of $B_n$ and a surjection $q_n:B_n\setminus B_{n+1}\twoheadrightarrow\beta_n$ as follows.
Let $k$ be the least natural number such that $\beta_n<\beta_k$ and such that $h_k[B_n]$ has order type~$\beta_k$.
Let $t$ be the unique isomorphism of $h_k[B_n]$ onto~$\beta_k$. Let $D=\{x\in B_n\mid t(h_k(x))\in\beta_n\}$.
Note that $\beta_n\cdot2<\beta_k$. Now, if \eqref{s007} holds with $B_n$ replaced by~$D$,
we define $B_{n+1}=D$ and let $q_n$ be the function on $B_n\setminus D$ defined by
\[
q_n(x)=
\begin{cases}
\text{the unique $\gamma<\beta_n$ such that $t(h_k(x))=\beta_n+\gamma$} & \text{if $t(h_k(x))<\beta_n\cdot2$,}\\
0                                                                       & \text{otherwise.}
\end{cases}
\]
Otherwise, it follows from \eqref{s007} and Fact~\ref{s005} that \eqref{s007} holds with $B_n$ replaced by $B_n\setminus D$,
and then we define $B_{n+1}=B_n\setminus D$ and let $q_n$ be the function on $D$ defined by $q_n(x)=t(h_k(x))$.
Clearly, in either case, $B_{n+1}\subseteq B_n$, \eqref{s007} holds with $B_n$ replaced by~$B_{n+1}$,
and $q_n$ is a surjection from $B_n\setminus B_{n+1}$ onto~$\beta_n$.
Now, it suffices to define $h=\bigcup_{n\in\omega}q_n\cup(\bigcap_{n\in\omega}B_n\times\{0\})$.
\end{proof}

\begin{lemma}\label{s008}
For all infinite sets $M$ and all sets~$N$, if there is a finite-to-one function from $N$ to~$M$,
then there are no surjections from $N$ onto~$\scrP(M)$.
\end{lemma}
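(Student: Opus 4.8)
The plan is to argue by contradiction and funnel everything into an impossibility about surjections onto \emph{ordinals}, where Lemma~\ref{s006} can be applied. So suppose $M$ is infinite, $\phi\colon N\to M$ is finite-to-one, and $F\colon N\twoheadrightarrow\scrP(M)$, and let me aim for a contradiction. Write $\aleph^\ast(M)$ for the least ordinal onto which there is no surjection from $M$; it is an initial ordinal, and by Theorem~\ref{kurt} it equals $\omega$ exactly when $M$ is power Dedekind finite and is an uncountable cardinal otherwise (if $M\twoheadrightarrow\omega$ then $M\twoheadrightarrow\beta$ for every countable $\beta$, so $\aleph^\ast(M)\ge\omega_1$). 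The goal of the argument is to produce an uncountable ordinal $\alpha$ with $\aleph^\ast(M)\le\alpha$ together with a surjection $\scrP(M)\twoheadrightarrow\alpha$. Granting this, composing with $F$ gives a surjection $s\colon N\twoheadrightarrow\alpha$, and then Lemma~\ref{s006} finishes the proof: by the choice of $\alpha$ we will have $M\twoheadrightarrow\alpha$, contradicting $\alpha\ge\aleph^\ast(M)$.

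The passage from $s\colon N\twoheadrightarrow\alpha$ to a surjection $\omega\times M\twoheadrightarrow\alpha$ (needed before invoking Lemma~\ref{s006}) is the one place where finiteness of the fibres of $\phi$ enters, and it is clean precisely because \emph{finite sets of ordinals are canonically well-ordered}. For each $m\in M$ the set $s[\phi^{-1}(m)]$ is a finite set of ordinals below $\alpha$; listing it increasingly as $\gamma^m_0<\dots<\gamma^m_{k_m-1}$, I set $r(i,m)=\gamma^m_i$ for $i<k_m$ and $r(i,m)=0$ otherwise. Then $r\colon\omega\times M\to\alpha$ is onto, since any $\delta<\alpha$ equals $s(x)$ for some $x\in N$ and hence occurs in the list for $m=\phi(x)$; no choice is used, because the enumeration of each finite fibre-image is forced. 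Applying Lemma~\ref{s006} (this is where $\alpha$ must be uncountable) then yields $M\twoheadrightarrow\alpha$ and the contradiction.

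What remains is to produce $\alpha$ and the surjection $\scrP(M)\twoheadrightarrow\alpha$, and this is the substance of the proof. If $M$ is power Dedekind finite, then $\scrP(M)$ is Dedekind finite, no uncountable ordinal is a surjective image of $\scrP(M)$, and the ordinal method is vacuous; this case must be handled separately, and is exactly the Dedekind-type instance of statement~\ref{s013} treated in the works cited in the introduction. So assume $M$ is power Dedekind infinite and put $\kappa=\aleph^\ast(M)$, an uncountable cardinal, and try $\alpha=\kappa$. The difficulty is that the natural way to read an ordinal $\gamma<\kappa$ off a set is as the order type of a well-ordering, and a well-ordering of a subset of $M$ lives in $\scrP(M\times M)$, which is \textbf{not} known in $\mathsf{ZF}$ to be a surjective image of $\scrP(M)$. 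The device for circumventing this is ordinal pairing (Lemma~\ref{s002}): when $\kappa=\lambda^+$ is a successor cardinal, a surjection $M\twoheadrightarrow\lambda$ induces $\scrP(M)\twoheadrightarrow\scrP(\lambda)$, and because $\lambda\times\lambda$ is in bijection with $\lambda$ every ordinal below $\lambda^+$ is the order type of a well-ordering coded by a \emph{single} subset of $\lambda$, so the order-type map gives $\scrP(\lambda)\twoheadrightarrow\lambda^+=\kappa$ (the instance $\lambda=\omega$ being Fact~\ref{s001}); composing yields $\scrP(M)\twoheadrightarrow\kappa$. \textbf{The main obstacle} is the limit-cardinal case: there the same construction only produces, cofinally in $\kappa$, surjections $\scrP(M)\twoheadrightarrow\mu^+$ for infinite cardinals $\mu<\kappa$, and these must be glued into a single surjection onto $\kappa$. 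I expect this gluing to be the crux, and to require exploiting the surjection $F$ itself (and the fine structure of the recursion in Lemma~\ref{s006}) rather than any generic property of $\scrP(M)$.
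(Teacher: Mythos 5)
There is a genuine gap, and you flagged it yourself: the limit-cardinal case of $\kappa=\aleph^\ast(M)$ is never proved, and it cannot be rescued by the successor trick, since gluing the surjections $\scrP(M)\twoheadrightarrow\mu^+$ (for $\mu<\kappa$) into one surjection onto $\kappa$ would require a uniform, choice-like selection of surjections $M\twoheadrightarrow\mu$, which is unavailable; indeed, $\scrP(M)\twoheadrightarrow\aleph^\ast(M)$ is not a theorem of $\mathsf{ZF}$ in general (only a coding into $\scrP(M\times M)$ is available, as you note), so the object your strategy requires may simply not exist, and limit $\aleph^\ast(M)$ does occur without choice. A second defect is that you dispose of the power Dedekind finite case by appealing to statement~\ref{s013} of the introduction --- but that statement \emph{is} Lemma~\ref{s008}, the very lemma being proved, so the citation is circular. (That case actually has a short direct fix: $A\mapsto|A|$ on finite subsets surjects $\scrP(M)$ onto $\omega$, hence $N\twoheadrightarrow\omega$, hence $M\twoheadrightarrow\omega$ by your own fibre-enumeration idea in its elementary form --- i.e., Lemma~\ref{s010} with $\alpha=\omega$ --- contradicting Theorem~\ref{kurt}; note that your route through Lemma~\ref{s006} is useless here precisely because $\omega$ is not uncountable.)

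The repair is exactly what your closing sentence guesses at: exploit the surjection $F$ itself and do not fix $\alpha$ in advance. The paper's proof (a citation of Shen 2017, Theorem~5.3; the method is displayed in full in the paper's proof of the $\scrPI$ analogue, Lemma~\ref{s011}) builds by recursion an injection $H\colon\mathrm{Ord}\to\scrP(M)$: given $H\rstr\alpha$ injective for an infinite ordinal $\alpha$, the composition $(H\rstr\alpha)^{-1}\circ F$ is a surjection from a subset of $N$ onto $\alpha$ (extended by zero) --- this is where $F$ supplies the uniformity your gluing lacked --- then the finite-to-one map converts it into a surjection $p\colon M\twoheadrightarrow\alpha$ via Lemma~\ref{s010} (essentially your canonical enumeration of finite fibre images, but valid for \emph{every} infinite $\alpha$, with no detour through $\omega\times M$ and no appeal to Lemma~\ref{s006}); the preimage map $A\mapsto p^{-1}[A]$ then injects $\scrP(\alpha)$ into $\scrP(M)$, and Cantor's diagonal (Theorem~\ref{cnt}) explicitly yields $H(\alpha)\in\scrP(M)\setminus H[\alpha]$. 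Since $\mathrm{Ord}$ is a proper class, this gives the contradiction with no case split on $\aleph^\ast(M)$ at all. Your successor-case computation and the fibre-enumeration step are correct as far as they go, but the proof as written is incomplete at its self-acknowledged crux.
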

\begin{proof}
Cf.~\cite[Theorem~5.3]{Shen2017}.
\end{proof}

Now we are ready to prove our main theorem.

\begin{theorem}\label{gct}
For all infinite sets~$M$, there are no surjections from $\omega\times M$ onto~$\scrP(M)$.
\end{theorem}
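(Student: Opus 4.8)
The plan is to argue by contradiction, reducing everything to surjections onto a \emph{single} ordinal so that Lemma~\ref{s006} can be used to strip off the factor~$\omega$. Suppose $f:\omega\times M\twoheadrightarrow\scrP(M)$, and let $\aleph^\ast(M)$ denote the least ordinal $\kappa$ for which there is no surjection from $M$ onto~$\kappa$. The decisive point is this: if one can produce a surjection $\omega\times M\twoheadrightarrow\aleph^\ast(M)$ with $\aleph^\ast(M)$ uncountable, then Lemma~\ref{s006} hands back a surjection $M\twoheadrightarrow\aleph^\ast(M)$, contradicting the very definition of $\aleph^\ast(M)$. Since $f$ already furnishes $\omega\times M\twoheadrightarrow\scrP(M)$, the entire difficulty is concentrated in showing $\scrP(M)\twoheadrightarrow\aleph^\ast(M)$.

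First I would record how the hypothesis and Lemma~\ref{s006} reinforce each other. For every infinite ordinal $\gamma<\aleph^\ast(M)$ there is a surjection $M\twoheadrightarrow\gamma$, which explicitly yields an injection $\scrP(\gamma)\hookrightarrow\scrP(M)$ by taking preimages, and hence a surjection $\scrP(M)\twoheadrightarrow\scrP(\gamma)$. As $\scrP(\gamma)$ is well-orderable of cardinality $2^{|\gamma|}\ge\gamma^{+}$, composing with $f$ gives $\omega\times M\twoheadrightarrow 2^{|\gamma|}$, and Lemma~\ref{s006} then produces $M\twoheadrightarrow 2^{|\gamma|}$. Thus $2^{|\gamma|}<\aleph^\ast(M)$ for every infinite $\gamma<\aleph^\ast(M)$; that is, the hypothesis \emph{forces} $\aleph^\ast(M)$ to be a strong limit cardinal, and (using Fact~\ref{s001} at the base step $\gamma=\omega$) to lie strictly above $\omega_1$. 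The power Dedekind finite case, in which $\aleph^\ast(M)=\omega$ is countable and Lemma~\ref{s006} is inapplicable, must be separated off at the outset and treated on its own, directly from the Dedekind finiteness of $\scrP(M)$.

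With $\kappa=\aleph^\ast(M)$ now an uncountable strong limit, the remaining task is to glue the surjections onto proper initial segments into one surjection onto $\kappa$ itself, and the factor $\omega$ is exactly what should make this possible. Iterating the boosting step $\gamma\mapsto 2^{|\gamma|}$ produces an explicit increasing sequence $\langle\beta_n\rangle_{n\in\omega}$ of ordinals below $\kappa$; along such a cofinal $\omega$-chain one splices the explicit surjections $M\twoheadrightarrow\beta_n$ into a surjection $\omega\times M\twoheadrightarrow\bigcup_n\beta_n$ by sending $(n,x)$ to a value in the block $[\beta_{n-1},\beta_n)$. If this supremum equals $\kappa$, a last application of Lemma~\ref{s006} gives $M\twoheadrightarrow\kappa=\aleph^\ast(M)$, the desired contradiction. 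The ordinal bookkeeping involved — rearranging the blocks and checking surjectivity — is routine and is precisely the sort of computation underwritten by Lemmas~\ref{s002}--\ref{s004} and Fact~\ref{s005}.

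I expect the genuine obstacle to be this gluing step: arranging that the cofinal $\omega$-chain actually exhausts $\kappa$, i.e.\ controlling $\operatorname{cf}(\kappa)$, since the $\omega$-indexing built into $\omega\times M$ only directly reaches suprema of cofinality $\omega$. When $\aleph^\ast(M)$ has uncountable cofinality — and a choiceless model may well make it regular, hence inaccessible — the naive splicing stalls, and one must instead route a cofinal sequence of length $\operatorname{cf}(\kappa)$, itself a surjective image of $M$, back through the construction. Showing that the assembly can always be carried out, uniformly and without any appeal to choice, together with the separate disposal of the power Dedekind finite case, is where the interaction between the hypothesis and Lemma~\ref{s006} has to be pushed hardest, and is the crux of the theorem.
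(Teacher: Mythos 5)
Your proposal is not a proof: by your own admission its crux --- reaching $\kappa=\aleph^\ast(M)$ itself when $\operatorname{cf}(\kappa)>\omega$ --- is left unresolved, and nothing in the proposal rules that case out (your boosting only shows $\kappa$ is a limit of the relevant operation; in a choiceless model it could well be regular). There are also two ZF errors along the way. First, ``$\scrP(\gamma)$ is well-orderable of cardinality $2^{|\gamma|}$'' is false without choice, even for $\gamma=\omega$; the salvageable boost is $\gamma\mapsto\gamma^{+}$, via the ZF fact $\gamma^{+}\preccurlyeq^\ast\scrP(\gamma)$ (code well-orderings of subsets of $\gamma$ as subsets of $\gamma\times\gamma$ and use Lemma~\ref{s002}; Fact~\ref{s001} is the case $\gamma=\omega$). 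Second, splicing surjections $M\twoheadrightarrow\beta_n$ along your $\omega$-chain requires choosing those surjections simultaneously, a countable-choice issue unless they are produced uniformly. Finally, the power Dedekind finite case is not disposed of ``directly'': one needs the finite-to-one generalization of Cantor's theorem (Lemma~\ref{s008}), applied after shrinking $\Phi$ to a surjection $\Psi$ whose columns $\Psi\rstr(\omega\times\{x\})$ are injective, so that power Dedekind finiteness of $M$ makes the fibers finite and the projection to $M$ finite-to-one.

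The idea you are missing is that one should not aim at a single critical ordinal from below at all. The paper instead builds by transfinite recursion an \emph{explicit} injection $H\colon\mathrm{Ord}\to\scrP(M)$ (started on $\omega_1$ via Fact~\ref{s001}, Theorem~\ref{kurt}, and Lemma~\ref{s006}). At an uncountable stage $\alpha$, the already-constructed injection $H\rstr\alpha$ composed with $\Phi$ gives, with no gluing and no cofinality hypothesis whatsoever, a surjection from a subset of $\omega\times M$ onto $\alpha$ (extend it by zero); Lemma~\ref{s006} converts this to $g\colon M\twoheadrightarrow\alpha$, and the explicit Cantor diagonal (Theorem~\ref{cnt}) applied to $(H\rstr\alpha)\circ g$ yields $H(\alpha)\in\scrP(M)\setminus H[\alpha]$. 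Since every step is uniformly definable from $H\rstr\alpha$ and $\Phi$, the recursion needs no choices and passes through limits for free, and an injection of the proper class $\mathrm{Ord}$ into the set $\scrP(M)$ contradicts Replacement (Hartogs's theorem). In your terminology: the hypothesis converts any injection $\alpha\preccurlyeq\scrP(M)$ into $\alpha\preccurlyeq^\ast M$ and hence into $\alpha+1\preccurlyeq\scrP(M)$, and uniformity handles limit stages; this feedback loop, not cofinality control at $\aleph^\ast(M)$, is the engine of the proof.
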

\begin{proof}
Assume toward a contradiction that there exist an infinite set $M$
and a surjection $\Phi:\omega\times M\twoheadrightarrow\scrP(M)$.
We first prove that $M$ is power Dedekind infinite.
Clearly, there is a surjection $\Psi\subseteq\Phi$ from a subset of $\omega\times M$
onto $\scrP(M)$ such that, for all $x\in M$, $\Psi\rstr(\omega\times\{x\})$ is injective.
If $M$ is power Dedekind finite, then $\dom(\Psi)\cap(\omega\times\{x\})$ is finite for all $x\in M$,
and thus there exists a finite-to-one function from $\dom(\Psi)$ to~$M$,
contradicting Lemma~\ref{s008}. Hence, $M$ is power Dedekind infinite.

Now, it follows from Theorem~\ref{kurt} that $\omega\preccurlyeq^\ast M$,
and thus, by Fact~\ref{s001}, $\omega_1\preccurlyeq^\ast\scrP(\omega)\preccurlyeq\scrP(M)\preccurlyeq^\ast\omega\times M$,
which implies that $\omega_1\preccurlyeq^\ast M$ by Lemma~\ref{s006} and hence $\omega_1\preccurlyeq\scrP(M)$.
Let $h$ be an injection from $\omega_1$ into~$\scrP(M)$. In what follows,
we get a contradiction by constructing by recursion an injection $H$
from $\mathrm{Ord}$ (the class of all ordinals) into~$\scrP(M)$.

For $\gamma<\omega_1$, we take $H(\gamma)=h(\gamma)$.
Now, we assume that $\alpha$ is an uncountable ordinal and that $H\rstr\alpha$ is an injection from $\alpha$ into~$\scrP(M)$.
Then $(H\rstr\alpha)^{-1}\circ\Phi$ is a surjection from a subset of $\omega\times M$ onto $\alpha$
and hence can be extended by zero to a surjection $f:\omega\times M\twoheadrightarrow\alpha$.
By Lemma~\ref{s006}, $f$ explicitly provides a surjection $g:M\twoheadrightarrow\alpha$.
Since $(H\rstr\alpha)\circ g$ is a surjection from $M$ onto~$H[\alpha]$,
it follows from Theorem~\ref{cnt} that we can explicitly define an
$H(\alpha)\in\scrP(M)\setminus H[\alpha]$ from $H\rstr\alpha$ (and~$\Phi$).
\end{proof}

\section{A further generalization}
In~\cite{Kirmayer1981}, Kirmayer proves that, for all infinite sets~$M$,
there are no surjections from $M$ onto~$\scrPI(M)$. In this section,
we generalize this result by showing that, for all infinite sets~$M$,
there are no surjections from $\omega\times M$ onto~$\scrPI(M)$,
which is also a generalization of Theorem~\ref{gct}.
The proof is similar to that of Theorem~\ref{gct}, but first we have to
prove that Lemma~\ref{s008} holds with $\scrP(M)$ replaced by~$\scrPI(M)$.

\begin{lemma}\label{s009}
From an infinite ordinal~$\alpha$, one can explicitly define an injection $f:\scrP(\alpha)\to\scrPI(\alpha)$.
\end{lemma}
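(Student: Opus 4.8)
The plan is to exploit the pairing injection from Lemma~\ref{s002} to split $\alpha$ into two disjoint infinite pieces, and then to encode an arbitrary subset $X\subseteq\alpha$ into one piece while always adjoining the whole of the other piece as ``padding'' to guarantee that the image is infinite.

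Concretely, first I would fix an explicit injection $J:\alpha\times\alpha\to\alpha$ as provided by Lemma~\ref{s002} and restrict attention to its first two columns. Writing $C_0=\{J(\gamma,0)\mid\gamma<\alpha\}$ and $C_1=\{J(\gamma,1)\mid\gamma<\alpha\}$, injectivity of $J$ makes these two subsets of $\alpha$ disjoint, and since $\alpha$ is infinite each of $C_0,C_1$ is infinite. I would then define, for each $X\in\scrP(\alpha)$,
\[
f(X)=\{J(\gamma,0)\mid\gamma\in X\}\cup C_1.
\]
Because $C_1\subseteq f(X)$ and $C_1$ is infinite, every value $f(X)$ lies in $\scrPI(\alpha)$, so $f:\scrP(\alpha)\to\scrPI(\alpha)$ is well-defined; and it is explicitly defined from $\alpha$, since $J$ is.

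For injectivity I would recover $X$ from $f(X)$. As $C_0\cap C_1=\emptyset$, we have $f(X)\cap C_0=\{J(\gamma,0)\mid\gamma\in X\}$, and applying the inverse of the injective map $\gamma\mapsto J(\gamma,0)$ returns exactly $X$. Hence $f(X)=f(Y)$ forces $X=Y$, as required.

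The only real subtlety---rather than a genuine obstacle---is that finite subsets, and in particular the empty set, must be sent to infinite sets while keeping the map injective; reserving the entire second column $C_1$ as a fixed infinite ``tail'' resolves this at once, since it never interferes with the recoverable first-column encoding of~$X$. No appeal to choice is needed, because $J$, and therefore $f$, is given by an explicit class function of~$\alpha$.
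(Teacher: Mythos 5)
Your proof is correct, and it differs from the paper's in an instructive way, though both hinge on the same key ingredient, the pairing injection $p\colon\alpha\times\alpha\to\alpha$ of Lemma~\ref{s002}. The paper avoids any fixed ``padding'' and instead uses a case distinction: it sets $f(A)=p[A\times\{0\}]$ when $A$ is infinite and $f(A)=p[(\alpha\setminus A)\times\{1\}]$ when $A$ is finite, so that infinitude of the image comes for free in the first case and from the infinitude of the complement $\alpha\setminus A$ in the second, while injectivity follows because the two cases land in the disjoint columns $C_0$ and $C_1$. You instead define $f$ uniformly, $f(X)=p[X\times\{0\}]\cup C_1$, reserving the entire second column as an infinite tail; this trades the case split for the padding trick, and your recovery argument ($X$ is the $p(\cdot,0)$-preimage of $f(X)\cap C_0$, using $C_0\cap C_1=\emptyset$) is airtight. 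Your version is arguably slightly cleaner in that injectivity is witnessed by a single explicit retraction with no case analysis, whereas the paper's version has the mild aesthetic advantage that $f(A)$ is always an injective image of a set of size $|A|$ or $|\alpha\setminus A|$ rather than always of size $|\alpha|$; for the purposes of Lemmas~\ref{s011} and Theorem~\ref{gkt}, where only the existence of an explicit injection matters, the two constructions are interchangeable. Both are visibly choice-free class functions of $\alpha$, as the ``explicitly define'' formulation requires.
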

\begin{proof}
By Lemma~\ref{s002}, we can explicitly define an injection $p:\alpha\times\alpha\to\alpha$.
Let $f$ be the function on $\scrP(\alpha)$ defined by
\[
f(A)=
\begin{cases}
p[A\times\{0\}]                   & \text{if $A$ is infinite,}\\
p[(\alpha\setminus A)\times\{1\}] & \text{otherwise.}
\end{cases}
\]
Then it is easy to see that $f$ is an injection from $\scrP(\alpha)$ into~$\scrPI(\alpha)$.
\end{proof}

\begin{lemma}\label{s010}
From a set $M$, a finite-to-one function $f:N\to M$, and a surjection $g:N\twoheadrightarrow\alpha$,
where $\alpha$ is an infinite ordinal, one can explicitly define a surjection $h:M\twoheadrightarrow\alpha$.
\end{lemma}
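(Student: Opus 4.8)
The plan is to discard $N$ immediately and work with the induced family of finite sets. First I would set, for each $y\in M$, $\Lambda(y)=g[f^{-1}[\{y\}]]$. Since $f$ is finite-to-one, each $f^{-1}[\{y\}]$ is finite, so $\Lambda(y)\in\fin(\alpha)$; and since $\bigcup_{y\in M}f^{-1}[\{y\}]=N$ and $g$ is onto, one has $\bigcup_{y\in M}\Lambda(y)=g[N]=\alpha$. Thus $\Lambda$ is a surjection from $M$ onto the family $F:=\ran(\Lambda)\subseteq\fin(\alpha)$, which is a family of finite subsets of $\alpha$ whose union is all of $\alpha$. The whole problem is therefore reduced to the following: from such a family, explicitly produce a surjection onto $\alpha$, and then precompose with $\Lambda$.

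The naive attempts — sending $y$ to $\min\Lambda(y)$, to $\max\Lambda(y)$, or to a fixed coordinate of the increasing enumeration of $\Lambda(y)$ — all fail to be onto, and trying to hit each $\xi<\alpha$ by choosing some $y$ with $\xi\in\Lambda(y)$ runs straight into the absence of a choice function on the subsets $\{y:\xi\in\Lambda(y)\}$ of the unstructured set $M$. The key observation that dissolves this obstacle is that $F$ is \emph{well-orderable}: by Lemma~\ref{s003} I can explicitly fix an injection $e\colon\fin(\alpha)\to\alpha$, whence $e\rstr F$ injects $F$ into the ordinal $\alpha$. Consequently $F$ is order-isomorphic to a unique ordinal $\sigma$ via a canonical bijection $b\colon\sigma\twoheadrightarrow F$ (collapse the image $e[F]\subseteq\alpha$ and compose with $e^{-1}$). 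Because a finite family of finite sets has finite union while $\alpha$ is infinite, $F$ must be infinite, so $\sigma$ is an infinite ordinal, and $b^{-1}\circ\Lambda$ is an explicit surjection from $M$ onto~$\sigma$.

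It then suffices to explicitly define a surjection from $\sigma$ onto $\alpha$, and for this I would flatten the now well-ordered cover: define $\pi\colon\sigma\times\omega\to\alpha$ by letting $\pi(i,k)$ be the $k$-th element (in increasing order) of the finite set $b(i)$ when $k<|b(i)|$, and $0$ otherwise. Since $\bigcup_{i<\sigma}b(i)=\bigcup F=\alpha$, the map $\pi$ is onto. Finally, as $\sigma$ is infinite, Lemma~\ref{s002} supplies an injection $\sigma\times\omega\to\sigma$ (using $\sigma\times\omega\subseteq\sigma\times\sigma$), which converts $\pi$ into a surjection $\sigma\twoheadrightarrow\alpha$; composing it with $b^{-1}\circ\Lambda$ yields the desired $h\colon M\twoheadrightarrow\alpha$. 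The only genuinely nontrivial step is the middle one — recognizing that the family of finite sets arising from a finite-to-one map is well-orderable through Lemma~\ref{s003} — since once a well-ordering of $F$ is in hand every remaining selection can be made canonically and the rest is routine ordinal bookkeeping, valid for every infinite ordinal $\alpha$ with no separate treatment of the countable case.
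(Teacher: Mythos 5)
Your proposal is correct, and every step is canonical as the ``explicitly define'' formulation requires: the map $\Lambda(y)=g[f^{-1}[\{y\}]]$ turns the finite-to-one hypothesis into a cover of $\alpha$ by finite sets indexed by $M$, Lemma~\ref{s003} well-orders that cover (the key point), and Lemma~\ref{s002} together with extension by zero flattens it into the desired surjection $h:M\twoheadrightarrow\alpha$. The paper gives no in-line argument here --- it simply cites \cite[Lemma~5.2]{Shen2017} --- and your reconstruction is essentially the standard proof of that cited lemma, so the two approaches coincide.
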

\begin{proof}
Cf.~\cite[Lemma~5.2]{Shen2017}.
\end{proof}

\begin{lemma}\label{s011}
For all infinite sets $M$ and all sets~$N$, if there is a finite-to-one function from $N$ to~$M$,
then there are no surjections from $N$ onto~$\scrPI(M)$.
\end{lemma}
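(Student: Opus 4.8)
The plan is to mirror the proofs of Lemma~\ref{s008} and Theorem~\ref{gct}. Assuming, toward a contradiction, an infinite set~$M$, a set~$N$, a finite-to-one function $f:N\to M$, and a surjection $g:N\twoheadrightarrow\scrPI(M)$, I would construct by transfinite recursion an injection $H$ from $\mathrm{Ord}$ (the class of all ordinals) into~$\scrPI(M)$, which is absurd since $\scrPI(M)$ is a set. At each stage the new value $H(\alpha)$ must be defined \emph{explicitly} from $H\rstr\alpha$ (together with the fixed data $f$ and~$g$).

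First I would check that $M$ is power Dedekind infinite, in order to get the recursion started. The map sending each $A\in\scrPI(M)$ to $|M\setminus A|$ when $M\setminus A$ is finite and to~$0$ otherwise is an explicitly defined surjection from $\scrPI(M)$ onto~$\omega$ (every finite co-size is realized because $M$ is infinite). Composing it with~$g$ yields a surjection from $N$ onto~$\omega$, so Lemma~\ref{s010} provides a surjection $e:M\twoheadrightarrow\omega$, whence $M$ is power Dedekind infinite by Theorem~\ref{kurt}. The map $n\mapsto e^{-1}\bigl[[n,\omega)\bigr]$ is then an injection from $\omega$ into~$\scrPI(M)$, which I would take as $H\rstr\omega$.

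For the recursion step, suppose $\alpha\geq\omega$ and $H\rstr\alpha$ is an injection from $\alpha$ into~$\scrPI(M)$. Then $(H\rstr\alpha)^{-1}\circ g$ is a surjection from a subset of $N$ onto~$\alpha$; extending it by zero gives a surjection from $N$ onto~$\alpha$, so Lemma~\ref{s010} yields a surjection $s:M\twoheadrightarrow\alpha$. It remains to define, explicitly from $H\rstr\alpha$ and~$s$, a fresh element $H(\alpha)\in\scrPI(M)\setminus H[\alpha]$.

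This last step is the main obstacle, and it is exactly where Lemma~\ref{s009} enters. Since $(H\rstr\alpha)\circ s$ is a surjection from $M$ onto~$H[\alpha]$, a direct application of Cantor's diagonal (Theorem~\ref{cnt}) would produce a subset of $M$ outside $H[\alpha]$, but that subset could be \emph{finite} and hence fail to lie in~$\scrPI(M)$. To circumvent this, I would use the injection $\iota:\scrP(\alpha)\to\scrPI(\alpha)$ from Lemma~\ref{s009}: the map $J:\scrP(\alpha)\to\scrPI(M)$ given by $J(S)=s^{-1}[\iota(S)]$ is injective (as $s$ is onto and $\iota$ is injective) and sends \emph{every} $S\subseteq\alpha$ to an infinite subset of~$M$, since $\iota(S)$ is an infinite subset of~$\alpha$ and the fibers of $s$ are nonempty. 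Diagonalizing through $\scrP(\alpha)$ rather than through $\scrP(M)$, I set $\Theta(\gamma)=J^{-1}(H(\gamma))$ when $H(\gamma)\in\ran(J)$ and $\Theta(\gamma)=\emptyset$ otherwise; Theorem~\ref{cnt} then gives $S_0=\{\gamma<\alpha\mid\gamma\notin\Theta(\gamma)\}\notin\ran(\Theta)$, and $H(\alpha):=J(S_0)$ is an infinite subset of~$M$ that cannot equal any $H(\gamma)$ with $\gamma<\alpha$ (else $\Theta(\gamma)=S_0$). This defines $H(\alpha)$ explicitly and preserves injectivity of~$H$, completing the recursion and yielding the desired contradiction.
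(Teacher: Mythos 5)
Your proposal is correct and takes essentially the same route as the paper's own proof: the same preliminary step (cofinite-complement surjection onto $\omega$, then Lemma~\ref{s010} to get $\omega\preccurlyeq^\ast M$), the same recursion via Lemma~\ref{s010} at stage~$\alpha$, and the same key trick of composing Lemma~\ref{s009} with the preimage map under a surjection $M\twoheadrightarrow\alpha$ (your $J$ is exactly the paper's $t$) so as to diagonalize in $\scrP(\alpha)$ instead of $\scrP(M)$. The only cosmetic differences are your concrete base injection $n\mapsto e^{-1}\bigl[[n,\omega)\bigr]$ where the paper cites $\omega\preccurlyeq\scrPI(\omega)\preccurlyeq\scrPI(M)$, and your $\Theta$, which is precisely the paper's zero-extended function~$u$.
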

\begin{proof}
Assume toward a contradiction that there exist an infinite set $M$ and a set $N$ such that
there exist a finite-to-one function $f:N\to M$ and a surjection $\Phi:N\twoheadrightarrow\scrPI(M)$.
Clearly, the function that maps each cofinite subset $A$ of $M$ to the cardinality of $M\setminus A$
is a surjection from a subset of $\scrPI(M)$ onto~$\omega$,
and hence $\omega\preccurlyeq^\ast\scrPI(M)\preccurlyeq^\ast N$,
which implies that $\omega\preccurlyeq^\ast M$ by Lemma~\ref{s010}.
Thus $\omega\preccurlyeq\scrPI(\omega)\preccurlyeq\scrPI(M)$.
Let $h$ be an injection from $\omega$ into~$\scrPI(M)$.
In what follows, we get a contradiction by constructing by recursion
an injection $H$ from $\mathrm{Ord}$ into~$\scrPI(M)$.

For $n\in\omega$, we take $H(n)=h(n)$. Now, we assume that $\alpha$ is an infinite
ordinal and that $H\rstr\alpha$ is an injection from $\alpha$ into~$\scrPI(M)$.
Then $(H\rstr\alpha)^{-1}\circ\Phi$ is a surjection from a subset of $N$ onto $\alpha$
and hence can be extended by zero to a surjection $g:N\twoheadrightarrow\alpha$. By Lemma~\ref{s010},
from $M$, $f$, and~$g$, we can explicitly define a surjection $p:M\twoheadrightarrow\alpha$.
Then the function $q$ on $\scrPI(\alpha)$ defined by $q(A)=p^{-1}[A]$ is an injection
from $\scrPI(\alpha)$ into~$\scrPI(M)$, and thus it follows from Lemma~\ref{s009}
that we can explicitly define an injection $t:\scrP(\alpha)\to\scrPI(M)$.
Then $t^{-1}\circ(H\rstr\alpha)$ is a bijection from a subset of $\alpha$ onto $t^{-1}[H[\alpha]]$
and thus can be extended by zero to a function $u:\alpha\to\scrP(\alpha)$.
By Theorem~\ref{cnt}, we can explicitly define a $B\in\scrP(\alpha)\setminus\ran(u)$.
Since $t^{-1}[H[\alpha]]\subseteq\ran(u)$, it follows that $B\notin t^{-1}[H[\alpha]]$,
which implies that $t(B)\notin H[\alpha]$. Now, it suffices to define $H(\alpha)=t(B)$.
\end{proof}

We are now in a position to prove the result mentioned at the beginning of this section.

\begin{theorem}\label{gkt}
For all infinite sets~$M$, there are no surjections from $\omega\times M$ onto~$\scrPI(M)$.
\end{theorem}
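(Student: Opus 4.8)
The plan is to follow the proof of Theorem~\ref{gct} almost verbatim, replacing $\scrP(M)$ by $\scrPI(M)$ throughout and substituting Lemma~\ref{s011} for Lemma~\ref{s008}, with the only genuinely new ingredient being the way the diagonal element is produced at uncountable stages of the recursion.

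First I would assume toward a contradiction that there are an infinite set $M$ and a surjection $\Phi:\omega\times M\twoheadrightarrow\scrPI(M)$, and show that $M$ is power Dedekind infinite. Exactly as before, by keeping for each $x\in M$ and each value of $\Phi\rstr(\omega\times\{x\})$ only the pair with least first coordinate, one obtains a surjection $\Psi\subseteq\Phi$ from a subset of $\omega\times M$ onto $\scrPI(M)$ such that $\Psi\rstr(\omega\times\{x\})$ is injective for every~$x$. If $M$ were power Dedekind finite, then $\scrPI(M)\subseteq\scrP(M)$ would be Dedekind finite, forcing $\dom(\Psi)\cap(\omega\times\{x\})$ to be finite for each~$x$; the projection $(n,x)\mapsto x$ would then be a finite-to-one function from $\dom(\Psi)$ to~$M$ admitting a surjection onto $\scrPI(M)$, contradicting Lemma~\ref{s011}. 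Hence $M$ is power Dedekind infinite.

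Next I would establish that $\omega_1\preccurlyeq\scrPI(M)$. By Theorem~\ref{kurt} we have $\omega\preccurlyeq^\ast M$, so $\scrPI(\omega)\preccurlyeq\scrPI(M)$. Combining Fact~\ref{s001} with the injection $\scrP(\omega)\to\scrPI(\omega)$ supplied by Lemma~\ref{s009} yields $\omega_1\preccurlyeq^\ast\scrPI(\omega)\preccurlyeq\scrPI(M)\preccurlyeq^\ast\omega\times M$, whence $\omega_1\preccurlyeq^\ast M$ by Lemma~\ref{s006}. Applying Lemma~\ref{s009} once more, now to $\omega_1$, gives $\omega_1\preccurlyeq\scrP(\omega_1)\preccurlyeq\scrPI(\omega_1)\preccurlyeq\scrPI(M)$, the last step again from $\omega_1\preccurlyeq^\ast M$. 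I then fix an injection $h:\omega_1\to\scrPI(M)$.

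Finally I would derive a contradiction by constructing by recursion an injection $H$ from $\mathrm{Ord}$ into $\scrPI(M)$, setting $H(\gamma)=h(\gamma)$ for $\gamma<\omega_1$. Given an uncountable ordinal $\alpha$ with $H\rstr\alpha$ an injection from $\alpha$ into $\scrPI(M)$, I would extend $(H\rstr\alpha)^{-1}\circ\Phi$ by zero to a surjection $f:\omega\times M\twoheadrightarrow\alpha$ and apply Lemma~\ref{s006} to obtain a surjection $p:M\twoheadrightarrow\alpha$. This is the one place where the proof of Theorem~\ref{gct} does not transfer directly: Theorem~\ref{cnt} produces a subset, not an infinite subset, so I cannot merely diagonalize against a surjection $M\twoheadrightarrow H[\alpha]$. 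Instead I would imitate the end of Lemma~\ref{s011}: the map $A\mapsto p^{-1}[A]$ injects $\scrPI(\alpha)$ into $\scrPI(M)$, and composing with Lemma~\ref{s009} produces an injection $t:\scrP(\alpha)\to\scrPI(M)$; extending $t^{-1}\circ(H\rstr\alpha)$ by zero to a function $u:\alpha\to\scrP(\alpha)$ and applying Theorem~\ref{cnt} yields some $B\in\scrP(\alpha)\setminus\ran(u)$, so that $B\notin t^{-1}[H[\alpha]]$ and therefore $t(B)\in\scrPI(M)\setminus H[\alpha]$; I would then set $H(\alpha)=t(B)$. Since there is no injection from the proper class $\mathrm{Ord}$ into a set, this completes the contradiction. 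The main (indeed only) obstacle is exactly this uncountable step, namely guaranteeing that the diagonal element lands in $\scrPI(M)$ rather than merely in $\scrP(M)$, and that is precisely what the extra machinery of Lemmas~\ref{s009} and~\ref{s011} was prepared in advance to handle.
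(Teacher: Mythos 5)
Your proposal is correct and follows essentially the same route as the paper's own proof: the same power-Dedekind-infinite argument via Lemma~\ref{s011}, the same chain $\omega_1\preccurlyeq^\ast\scrP(\omega)\preccurlyeq\scrPI(\omega)\preccurlyeq\scrPI(M)\preccurlyeq^\ast\omega\times M$ with Lemma~\ref{s006}, and the identical recursion step in which $A\mapsto g^{-1}[A]$ together with Lemma~\ref{s009} yields an injection $t:\scrP(\alpha)\to\scrPI(M)$ against which one diagonalizes via Theorem~\ref{cnt}. You correctly identified the only point where the proof of Theorem~\ref{gct} does not transfer directly, and your fix is exactly the paper's.
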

\begin{proof}
We proceed along the lines of the proof of Theorem~\ref{gct}.
Assume toward a contradiction that there exist an infinite set $M$
and a surjection $\Phi:\omega\times M\twoheadrightarrow\scrPI(M)$.
We first prove that $M$ is power Dedekind infinite.
Clearly, there is a surjection $\Psi\subseteq\Phi$ from a subset of $\omega\times M$
onto $\scrPI(M)$ such that, for all $x\in M$, $\Psi\rstr(\omega\times\{x\})$ is injective.
If $M$ is power Dedekind finite, then $\dom(\Psi)\cap(\omega\times\{x\})$ is finite for all $x\in M$,
and thus there exists a finite-to-one function from $\dom(\Psi)$ to~$M$,
contradicting Lemma~\ref{s011}. Hence, $M$ is power Dedekind infinite.

Now, it follows from Theorem~\ref{kurt} that $\omega\preccurlyeq^\ast M$,
and thus, by Fact~\ref{s001} and Lemma~\ref{s009},
$\omega_1\preccurlyeq^\ast\scrP(\omega)\preccurlyeq\scrPI(\omega)\preccurlyeq\scrPI(M)\preccurlyeq^\ast\omega\times M$,
which implies that $\omega_1\preccurlyeq^\ast M$ by Lemma~\ref{s006}
and hence $\omega_1\preccurlyeq\scrPI(\omega_1)\preccurlyeq\scrPI(M)$.
Let $h$ be an injection from $\omega_1$ into~$\scrPI(M)$. In what follows,
we get a contradiction by constructing by recursion
an injection $H$ from $\mathrm{Ord}$ into~$\scrPI(M)$.

For $\gamma<\omega_1$, we take $H(\gamma)=h(\gamma)$.
Now, we assume that $\alpha$ is an uncountable ordinal and that $H\rstr\alpha$ is an injection from $\alpha$ into~$\scrPI(M)$.
Then $(H\rstr\alpha)^{-1}\circ\Phi$ is a surjection from a subset of $\omega\times M$ onto $\alpha$
and hence can be extended by zero to a surjection $f:\omega\times M\twoheadrightarrow\alpha$.
By Lemma~\ref{s006}, $f$ explicitly provides a surjection $g:M\twoheadrightarrow\alpha$.
Then the function $q$ on $\scrPI(\alpha)$ defined by $q(A)=g^{-1}[A]$ is an injection
from $\scrPI(\alpha)$ into~$\scrPI(M)$, and thus it follows from Lemma~\ref{s009}
that we can explicitly define an injection $t:\scrP(\alpha)\to\scrPI(M)$.
Then $t^{-1}\circ(H\rstr\alpha)$ is a bijection from a subset of $\alpha$ onto $t^{-1}[H[\alpha]]$
and thus can be extended by zero to a function $u:\alpha\to\scrP(\alpha)$.
By Theorem~\ref{cnt}, we can explicitly define a $B\in\scrP(\alpha)\setminus\ran(u)$.
Since $t^{-1}[H[\alpha]]\subseteq\ran(u)$, it follows that $B\notin t^{-1}[H[\alpha]]$,
which implies that $t(B)\notin H[\alpha]$. Now, it suffices to define $H(\alpha)=t(B)$.
\end{proof}

Using the method presented here, we can also show that the statements \ref{s012}--\ref{s013} in Section~\ref{s014}
holds with $\scrP(M)$ replaced by~$\scrPI(M)$ (Lemma~\ref{s011} is just the statement \ref{s013} for~$\scrPI(M)$).
We shall omit the details.

The questions whether the existence of an infinite set $A$ such that
there is a surjection from $A^2$ onto $\scrP(A)$ or from $[A]^2$ onto $\scrP(A)$
is consistent with~$\mathsf{ZF}$ are left open.

\subsection*{Acknowledgements}
Peng was partially supported by NSFC No.~11901562 and the Hundred Talents Program of the Chinese Academy of Sciences.
Shen was partially supported by NSFC No.~12101466.


\normalsize

\end{document}